\newcommand{\mc}{\mathcal}
\newcommand{\sss}{\scriptscriptstyle}
\newcommand{\binr}[2]{\ar@<.5ex>[r]^{#1} \ar@<-.5ex>[r]_{#2}}
\newcommand{\bind}[2]{\ar@<.5ex>[d]^{#1} \ar@<-.5ex>[d]_{#2}}
\def\ra{\rightarrow}
\def\cross{\mathrm{cr}}
\def\QQ{{\mathbb{Q}}}
\def\ZZ{{\mathbb{Z}}}
\def\PP{{\mathbb{P}}}
\def\cB{{\mathcal B}}
\def\cM{{\mathcal M}}
\def\cO{{\mathcal O}}
\def\cP{{\mathcal P}}
\DeclareMathOperator\Aut{Aut}
\newtheorem{thm}{Theorem}[section]
\newtheorem*{thm*}{Theorem}
\newtheorem{cor}[thm]{Corollary}
\newtheorem{lem}[thm]{Lemma}
\theoremstyle{definition}
\newtheorem{defn}[thm]{Definition}
\newtheorem*{defn*}{Definition}
\newtheorem{rem}[thm]{Remark}
\theoremstyle{remark}
\let\c@equation\c@thm
\numberwithin{equation}{section}
\title[Formulae for Exterior power operations]
{Two Formulae for Exterior power operations on higher $K$-groups}
\author{Tom Harris}
\address{University Printing House\\ Shaftesbury Road\\ Cambridge CB2 8BS\\ United Kingdom}
\email{tharris@cambridge.org}
\author{Bernhard K\"{o}ck}
\address{Mathematical Sciences\\ University of Southampton\\ Highfield\\ Southampton SO17 1BJ\\ United Kingdom}
\email{b.koeck@soton.ac.uk}
\date{\today}
\begin{document}

\begin{abstract}
Exterior power operations on the higher $K$-groups of a quasi-compact scheme have recently been constructed by Taelman and the authors by purely algebraic means. In this paper, we prove two formulae that help to compute these operations. The first is a formula for exterior powers of external products. The second is a formula for exterior powers of $n$-cubes, i.e., of acyclic binary multi-complexes supported on~$[0,1]^n$. These formulae provide evidence for the expectation that our exterior power operations agree with those defined by Hiller.
\end{abstract}

\subjclass[2010]{13D15, 15A75, 19D99, 19E08}

\keywords{exterior power operations; higher $K$-groups, external products; Cauchy decomposition; cross-effect; modified Milnor $K$-groups}

\maketitle

\section*{Introduction}

Let $X$ be a quasi-compact scheme. Following Grayson's algebraic description \cite{Grayson2012} of higher $K$-groups in terms of explicit generators and relations, Taelman and the authors of this paper have, in \cite{HKT17}, constructed an exterior power operation $\lambda^r$ on $K_n(X)$ for every $r \ge 1$ and $n \ge 0$ by assigning an explicit element $\lambda^r(\PP)$ to each of Grayson's generators $\PP$. While our construction is purely algebraic (i.e., unlike all previous constructions does not use any homotopy theoretic methods) and $\lambda^r(\PP)$ can in principle be written down, this is normally too combinatorially complex to be done by hand. The purpose of this paper is to augment our construction by two formulae that help to compute~$\lambda^r$.

Our first formula (\cref{thm: ExternalProducts}) says that for all $x \in K_m(X)$ and all $y \in K_n(X)$ we have
\[\lambda^r(x \smallsmile y) = (-1)^{r-1}\; r\; \lambda^r(x) \smallsmile \lambda^r(y) \quad \textrm{ in } \quad K_{m+n}(X).\]
Here, $\smile$ denotes the external product between higher $K$-groups defined in \cref{def: ExternalProducts} below. The main ingredient in the proof is the characteristic-free Cauchy decomposition of the $r^\mathrm{th}$ exterior power of a tensor product as constructed by Akin, Buchsbaum and Weyman in \cite{ABW}. \cref{cor: AdamsOperations} says that the corresponding Adams operations are multiplicative with respect to~$\smile$, as expected by classical formulae.

Our second formula (\cref{thm: Exterior Powres of Cubes}) computes the operation~$\lambda^r$ when applied to certain generators of $K_n(X)$, namely to those which are given by a locally-free $\cO_X$-module $P$ of finite rank together with $n$ pairwise-commuting automorphisms $A_1, \ldots, A_n$, and which we denote $[P;A_1, \ldots, A_n]$. For the reader familiar with the notion of binary multi-complexes (see beginning of \cref{sec: ExternalProducts} for details), we explain already now that $[P;A_1, \ldots, A_n]$ is the $n$-dimensional acyclic binary multi-complex whose entry at every place $(j_1, \ldots, j_n) \in \{0,1\}^n$ is $P$ and is equal to $0$ everywhere else and whose non-trivial edges in direction $i \in \{1, \ldots, n\}$ are all equal to the binary isomorphism $\xymatrix{P \binr{A_i}{1}& P}$. Then, for $r=2$, the element $\lambda^r[P; A_1, \ldots, A_n]$ is equal to
\[[\Lambda^2(P); \Lambda^2(A_1), \ldots, \Lambda^2(A_n)] - [P\otimes P; A_1 \otimes 1, A_2 \otimes A_2, \ldots, A_n \otimes A_n],\]
and, for $r=3$, it is equal to
\begin{align*}
&[\Lambda^3(P); \Lambda^3(A_1), \ldots, \Lambda^3(A_n)]\\
& - [\Lambda^2(P) \otimes P; \Lambda^2(A_1) \otimes 1, \Lambda^2(A_2) \otimes A_2, \ldots, \Lambda^2(A_n) \otimes A_n]\\
& - [P \otimes \Lambda^2(P); A_1 \otimes 1, A_2 \otimes \Lambda^2(A_2), \ldots, A_n \otimes \Lambda^2(A_n)]\\
& + [P \otimes P \otimes P; A_1 \otimes 1 \otimes 1, A_2 \otimes A_2 \otimes A_2, \ldots, A_n \otimes A_n \otimes A_n];
\end{align*}
for arbitrary $r$, our formula describes $\lambda^r[P; A_1, \ldots, A_n]$ in terms of the cross-effects of $\Lambda^r$, see \cref{thm: Exterior Powres of Cubes}. Its proof is based on the explicit description of the $r^\mathrm{th}$ exterior power of an ordinary chain complex supported on $[0,1]$ as given in Lemma~2.2 of \cite{Koszul}. It proceeds by induction on $n$ and uses a generalised version of the fact that internal products of elements in $K_n(X)$ vanish if $n \ge 1$, see \cref{lem: tensorproducts}.

In \cref{cor: Hiller}, we derive from \cref{thm: Exterior Powres of Cubes} that our $\lambda^r$ agrees with the operation $\lambda^r$ defined by Hiller \cite{Hiller} on the $K_1$-group of any commutative ring~$R$.  Assuming that our external products agree with those used by Hiller, we moreover conclude using \cref{cor: AdamsOperations} that the corresponding Adams operations agree on external products of $K_1$-elements. We of course expect that not only the Adams operations, but already the exterior power operations agree, and not only on external products of elements of $K_1(X)$ but on arbitrary elements of $K_n(X)$.

In his recent PhD thesis \cite{Grech}, Grech introduces modified Milnor $K$-groups $\widetilde{K}^\mathrm{M}_n(R)$ for any ring $R$ together with canonical maps $\widetilde{K}^\mathrm{M}_n(R) \rightarrow K_n(R)$. \cref{thm: Exterior Powres of Cubes} implies that the image of these maps is invariant under $\lambda^r$, see \cref{cor: MilnorInvariant}. This raises the question: do there exist exterior power operations on $\widetilde{K}^\mathrm{M}_n(R)$ as well?

\medskip
{\bf Acknowledgments.} The authors thank Daniel Grech, Marco Schlichting and Ferdinando Zanchetta for stimulating discussions and for their interest in our results.

\section{Exterior Powers of External Products}\label{sec: ExternalProducts}

In this section, we first define (\cref{def: ExternalProducts}) explicit external products
\[\xymatrix{K_m(X) \times K_n(X) \ar[r]^{\hspace*{1.5em}\smile} & K_{m+n}(X)}\]
on the higher $K$-groups of a quasi-compact scheme $X$, using Grayson's algebraic presentation of higher $K$-groups. We then state and prove a formula for the $r^\mathrm{th}$ exterior power operation applied to external products, see \cref{thm: ExternalProducts}. It implies (\cref{cor: AdamsOperations}) that the corresponding Adams operations are multiplicative with respect to external products, as expected by classical formulae.

\medskip

\noindent Let $X$ be a quasi-compact scheme and let $\cP(X)$ denote the exact category of locally free $\cO_X$-modules of finite rank. We recall an {\em acyclic binary complex} $\PP=(P_*, d, \tilde{d})$ in~$\cP(X)$ is a graded object $P_*$ in $\cP(X)$ supported on a finite subset of $[0,\infty)$ together with two degree $-1$ maps $d, \tilde{d}:P_* \rightarrow P_*$ such that both $(P_\ast, d)$ and $(P_*,\tilde{d})$ are acyclic chain complexes. If $d=\tilde{d}$,  the complex~$\PP$ is said to be {\em diagonal}.

A {\em morphism between acyclic binary complexes} $\PP$ and $\QQ$ is a degree 0 map between the underlying graded objects that is a chain map with respect to both differentials. The obvious definition of short exact sequences turns the category of acyclic binary complexes into an exact category. This category will be denoted $B^\mathrm{q}_\mathrm{b}(\cP(X))$ (as in \cite{HKT17}) or simply $\cB(X)$.

Iterating this construction $n$ times we obtain the category
\[\cB^n(X):= B_\mathrm{b}^\mathrm{q}\left(\cB^{n-1}(X)\right)\]
of {\em acyclic binary multicomplexes of dimension $n$}. Explicitly, an object of~$\cB^n(X)$ is a  $\ZZ_{\ge 0}^n$-graded object in $\cP(X)$ equipped with
\textit{two} (acyclic) differentials, denoted $d^i$ and $\tilde{d}^i$, in each direction
 $1 \le i \le n$, such that $d^id^j = d^jd^i$, $d^i\tilde{d}^j = \tilde{d}^jd^i$, $\tilde{d}^id^j = d^j\tilde{d}^i$ and $\tilde{d}^i\tilde{d}^j = \tilde{d}^j\tilde{d}^i$ whenever $i \neq j$. If $d^i = \tilde{d}^i$ for at least one~$i$, the multicomplex is said to be {\em diagonal}. We will follow the convention that the $i^\mathrm{th}$ coordinate of $\ZZ_{\ge 0}^n$ corresponds to the $i^{th}$ application of the operation $B^\mathrm{q}_\mathrm{b}$. When depicting objects of $\cB^2(X)$, differentials in the first and second direction will be displayed horizontally and vertically, respectively.

\begin{defn}\label{def:GraysonKn}
Let $n\ge 0$. The quotient of the Grothendieck group $K_0(\cB^n(X))$ obtained by declaring the classes of diagonal acyclic binary multicomplexes complexes to be zero is called the {\em $n^\textrm{th}$ $K$-group of $X$} and denoted~$K_n(X)$.
\end{defn}

Grayson proves in \cite[Corollary 7.2]{Grayson2012} that $K_n(X)$ is naturally isomorphic to Quillen's $n^\textrm{th}$ $K$-group of $X$. This justifies our notation. Note that Grayson uses complexes supported on $(-\infty, \infty)^n$; by \cite[Proposition~1.4]{HKT17}, defining $K_n(X)$ with complexes supported on~$[0,\infty)^n$ as above yields the same $K_n$-group.

\begin{defn}\label{def: ExternalProducts}
Given binary multicomplexes $\PP \in \cB^m (X)$ and $\QQ \in \cB^n (X)$, we form their {\em external tensor product} $\PP \boxtimes \QQ \in \cB^{m+n}(X)$ as follows: if $n=0$, i.e., if $\QQ$ consists of a single object $Q$, the multicomplex $\PP \boxtimes \QQ$ is obtained from $\PP$ by tensoring every object in $\PP$ with $Q$ and every differential with~$1_\QQ$, see also \cite[Section~7]{HKT17}. If $n\ge 1$, we write $\QQ=(Q_\ast, d^n_\QQ, \tilde{d}^n_\QQ)$ with $Q_i \in \cB^{n-1}(X)$ and recursively define:
\[\PP \boxtimes \QQ: \quad \xymatrix{ \cdots \binr{}{} &\PP \boxtimes Q_2 \binr{\sss 1_\PP \otimes d^n_\QQ}{\sss 1_\PP \otimes \tilde{d}^n_\QQ} &\PP \boxtimes Q_1 \binr{\sss 1_\PP \otimes d^n_\QQ}{\sss 1_\PP \otimes \tilde{d}^n_\QQ} & \PP \boxtimes Q_0}, \]
where the displayed differentials point in the $(m+n)^\mathrm{th}$ direction.
\end{defn}

For example, if $m=n=1$, the product $\PP \boxtimes \QQ$ is given by the following diagram:
 \[  \xymatrix{
  & \vdots \bind{}{}& \vdots \bind{}{}\\
  \dots \binr{}{} & \hspace*{1em} P_1 \otimes Q_1 \hspace*{1em} \binr{\sss \hspace*{0.3em} d_\PP \otimes 1}{\sss \hspace*{0.3em} \tilde{d}_\PP \otimes 1} \bind{\sss 1 \otimes {d}_\QQ}{\sss 1 \otimes \tilde{d}_\QQ} &\hspace*{1em} P_0 \otimes Q_1 \bind{\sss 1 \otimes {d}_\QQ}{\sss 1 \otimes \tilde{d}_\QQ}\\
  \dots \binr{}{} & \hspace*{1em} P_1 \otimes Q_0 \hspace*{1em} \binr{\sss \hspace*{0.3em} d_\PP \otimes 1}{\sss \hspace*{0.3em}\tilde{d}_\PP \otimes 1} & \hspace*{1em} P_0 \otimes Q_0;
  }
 \]

\begin{lem}
The product $(\PP, \QQ) \mapsto \PP \boxtimes \QQ$ induces a well-defined graded ring structure on $K_*(X) = \bigoplus_{n \ge 0} \;K_n(X)$.
\end{lem}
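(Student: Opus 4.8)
The plan is to verify the ring axioms in two stages: first that $\boxtimes$ is a well-defined, bi-additive, associative, and (graded-)commutative operation on the categories $\cB^m(X)$ up to the appropriate notion of equivalence, and then that it descends to the quotient $K_*(X)$, i.e., that it respects both the Grothendieck-group relations (short exact sequences) and the defining relation of $K_n(X)$ (diagonal multicomplexes are zero). At the level of categories, I would first observe that $\PP \boxtimes -$ and $- \boxtimes \QQ$ are exact functors: this follows by induction on $n$ from the $n=0$ case, where tensoring a fixed complex $\PP$ with a single locally free module $Q$ is exact because $Q$ is flat, and the differentials of $\PP \boxtimes \QQ$ are assembled levelwise from those of $\PP$ and $\QQ$, so acyclicity in each direction is preserved (a levelwise tensor of an acyclic complex with a flat module stays acyclic, and this is stable under the iterated $B^\mathrm{q}_\mathrm{b}$ construction). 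Exactness in each variable separately gives bi-additivity on $K_0(\cB^m(X)) \times K_0(\cB^n(X)) \to K_0(\cB^{m+n}(X))$.

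Next I would check associativity and graded-commutativity at the categorical level. For associativity, $(\PP \boxtimes \QQ) \boxtimes \RR$ and $\PP \boxtimes (\QQ \boxtimes \RR)$ are canonically isomorphic in $\cB^{\ell+m+n}(X)$ via the associativity isomorphism of $\otimes$ applied entrywise, which is compatible with all the differentials by construction; one checks this by the same induction on the dimension of the last factor. Graded-commutativity is the one place a sign genuinely enters: the swap map $\PP \boxtimes \QQ \to \QQ \boxtimes \PP$ permutes the $m+n$ directions, and reordering the directions of an acyclic binary multicomplex is an equivalence on $K$-theory, but transposing two adjacent directions introduces the Koszul sign. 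One must confirm that the $n$-fold iteration of $B^\mathrm{q}_\mathrm{b}$ interacts with direction-permutations exactly as in Grayson's setup so that the class of the permuted complex in $K_{m+n}(X)$ equals $(-1)^{mn}$ times the original; this is implicit in \cite{Grayson2012} and \cite{HKT17} and I would cite it rather than reprove it.

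Finally, the descent to $K_*(X)$: I must show that if $\PP$ is diagonal (i.e. $d^i = \tilde d^i$ for some $i$) then $\PP \boxtimes \QQ$ is diagonal, and symmetrically. This is immediate from the definition — if the $i^\mathrm{th}$ pair of differentials of $\PP$ coincide, then so does the corresponding pair in $\PP \boxtimes \QQ$, since those differentials are just $d^i_\PP \otimes 1$ and $\tilde d^i_\PP \otimes 1$ (and mutatis mutandis when the diagonal direction comes from $\QQ$, using the recursive clause). Hence $\boxtimes$ sends the subgroup generated by diagonal classes in either factor into the subgroup generated by diagonal classes, so it factors through the quotients defining $K_m(X)$, $K_n(X)$ and $K_{m+n}(X)$. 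Combining this with bi-additivity, associativity and graded-commutativity yields the graded ring structure on $K_*(X)$, with unit the class of the rank-one free module $\cO_X$ in $K_0(X)$.

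The main obstacle I anticipate is bookkeeping rather than conceptual: making the induction on multicomplex dimension watertight, in particular keeping track of which of the $m+n$ directions each differential lives in after forming $\PP \boxtimes \QQ$ and after the various canonical isomorphisms, and getting the Koszul signs in graded-commutativity to come out consistently with the sign conventions already fixed for $K_n(X)$. None of the steps is deep, but the associativity and commutativity isomorphisms have to be checked to be genuine isomorphisms of binary \emph{multi}complexes (respecting all pairs of differentials simultaneously), and that verification, while routine, is the part that requires care.
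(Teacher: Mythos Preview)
Your outline matches what the paper has in mind --- its own proof is simply ``Straightforward; for details see \cite[Lemma~2.26]{HarPhD}'' --- and your treatment of exactness in each variable, associativity, the unit, and descent to the quotient (diagonals map to diagonals) is correct and already sufficient for the lemma.

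There is, however, one genuine error. You include graded-commutativity among the axioms to be verified and assert that the sign $(-1)^{mn}$ arising from permuting directions is ``implicit in \cite{Grayson2012} and \cite{HKT17}''. It is not. The lemma asserts only a \emph{graded ring structure}, i.e.\ an associative unital ring whose product respects the grading; it does not claim anti-commutativity. In the remark immediately following this lemma, the paper explicitly states as an open \emph{conjecture} that a permutation $\sigma \in \Sigma_n$ acts on $K_n(X)$ by $\mathrm{sgn}(\sigma)$, and observes that anti-commutativity of $K_*(X)$ would follow \emph{if} that conjecture holds. The only route the authors know to anti-commutativity passes through the (also conjectural, in this framework) identification of $\smallsmile$ with the classical product on Quillen $K$-theory. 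So the sign statement you propose to cite is not available; drop the graded-commutativity step, and the remaining parts of your plan prove exactly what the lemma claims.
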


\noindent We use the symbol $\smallsmile$ to denote this ring structure.

\begin{proof}
Straightforward; for details see \cite[Lemma~2.26]{HarPhD}.
\end{proof}

\begin{rem}

Permuting the directions (while not changing the role of top and bottom differential), defines an action of the symmetric group $\Sigma_n$ on $\cB^n(X)$ and hence on $K_n(X)$. We conjecture that any  $\sigma \in \Sigma_n$ acts on $K_n(X)$ by multiplication by $\mathrm{sgn}(\sigma)$. If true, this implies that the graded ring $K_*(X)$ is anti-commutative.

Moreover, via Grayson's identification of $K_n(X)$ with Quillen's $K_n$-group, we expect our ring structure $\smallsmile$ to agree with the ring structure defined in \cite[Definition~IV.6.6]{K-book}. Assuming this, \cite[Theorem IV.1.10]{K-book} then implies as well that $K_*(X)$ is anti-commu\-ta\-tive. Similarly, from \cite[III \S 7]{K-book} we obtain that the Steinberg relation holds: given a field $F$ and $a \in F \backslash \{0, 1\}$, we have
\[[a] \smallsmile [1-a] = 0 \quad \textrm{ in } \quad K_2(F),\]
where $[a]$ denotes the class in $K_1(F)$ of the acyclic binary complex
\[\xymatrix{
\dots \binr{0}{0} & F \binr{a}{1} & F.
}\]
We do not know of a proof of these statements that is purely algebraic and that, more specifically, is intrinsic to the binary complex context. In \cite[Section~5.3]{Grech}, Grech provides such a proof for the Steinberg relation by using the homotopy theorem which however has not as yet been proved algebraically.
\end{rem}

In \cite{HKT17}, we have defined exterior power operations $\lambda^r$, $r \ge 1$, on $K_n(X)$, $n \ge 0$, and shown that these operations make $K_*(X)$ into a (special) $\lambda$-ring. There, the product on $\bigoplus_{n \ge 1} K_n(X)$ is zero; that product may also be viewed as the one induced by the simplicial tensor product, see \cite[Section~5]{HKT17}, and should therefore perhaps be called the {\em internal product}. The following theorem provides a formula for exterior powers of {\em external} products.

\begin{thm}\label{thm: ExternalProducts}
Let $m, n, r >0$, $x \in K_m(X)$ and $y \in K_n(X)$. Then we have
\[\lambda^r(x \smallsmile y) = (-1)^{r-1}\; r\; \lambda^r(x) \smallsmile \lambda^r(y) \quad \textrm{ in } \quad K_{m+n}(X).\]
\end{thm}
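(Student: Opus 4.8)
The plan is to reduce to generators, apply the characteristic-free Cauchy decomposition of Akin--Buchsbaum--Weyman, and then collapse the resulting sum using the vanishing of internal products in positive degrees. First, since $m,n\ge 1$, both sides of the asserted identity are biadditive in $(x,y)$: the internal product of any two positive-degree elements of $K_*(X)$ vanishes \cite{HKT17}, so $\lambda^r$ is additive on each of $K_m(X)$, $K_n(X)$ and $K_{m+n}(X)$ --- in $\lambda^r(a+b)=\sum_{i+j=r}\lambda^i(a)\lambda^j(b)$ every term with $i,j\ge 1$ is a product of positive-degree classes and hence is zero --- while $\smallsmile$ is bilinear. Since the classes $[\PP]$ of objects $\PP\in\cB^m(X)$ generate $K_m(X)$, and similarly in degree $n$, it is enough to prove the identity when $x=[\PP]$ and $y=[\QQ]$ for binary multicomplexes $\PP\in\cB^m(X)$ and $\QQ\in\cB^n(X)$.

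The main step is a chain-level Cauchy decomposition. By \cref{def: ExternalProducts}, $[\PP]\smallsmile[\QQ]=[\PP\boxtimes\QQ]$, and on underlying graded objects $\PP\boxtimes\QQ$ is the termwise tensor product of $\PP$ and $\QQ$, carrying the differentials of $\PP$ tensored with identities in the first $m$ directions and identities tensored with the differentials of $\QQ$ in the last $n$ directions. The operation $\lambda^r$ of \cite{HKT17} assigns to a binary multicomplex an explicit element of $K_*(X)$ built functorially out of the exterior powers $\Lambda^a$ of its terms. By \cite{ABW}, for locally free $\cO_X$-modules $M$ and $N$ the sheaf $\Lambda^a(M\otimes N)$ carries a filtration, natural in the pair $(M,N)$, with associated graded $\bigoplus_{\mu\vdash a}L_\mu(M)\otimes L_{\mu'}(N)$, where $L_\mu$ denotes the (co-)Schur functor and $\mu'$ the conjugate partition. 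Feeding this natural filtration termwise into the construction of $\lambda^r(\PP\boxtimes\QQ)$, all the differentials of $\PP\boxtimes\QQ$ preserve the induced filtration, and on the associated graded one obtains, for each $\mu\vdash r$, the external tensor product of the Schur-functor constructions on $\PP$ and on $\QQ$. Since in $K_{m+n}(X)$ a filtered object equals the sum of its subquotients, and writing $s_\mu$ for the Schur operation of the $\lambda$-ring $K_*(X)$ --- so that the chain-level Schur-functor construction realises $[\PP]\mapsto s_\mu([\PP])$, $s_\mu$ being a universal polynomial in $\lambda^1,\lambda^2,\dots$ --- we obtain
\[\lambda^r\bigl([\PP]\smallsmile[\QQ]\bigr)=\sum_{\mu\vdash r}s_\mu\bigl([\PP]\bigr)\smallsmile s_{\mu'}\bigl([\QQ]\bigr)\qquad\textrm{in } K_{m+n}(X).\]

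It remains to evaluate this sum. For $z\in K_m(X)$ with $m\ge 1$, any monomial in the expansion of the Schur polynomial $s_\mu$ in the elementary symmetric polynomials $e_i=\lambda^i$ that involves two or more factors $e_i$ with $i\ge 1$ specialises at $z$ to a product of positive-degree classes and hence to $0$; therefore $s_\mu(z)=c_\mu\,\lambda^r(z)$, where $c_\mu$ is the coefficient of the monomial $e_r$ in $s_\mu$. By the Jacobi--Trudi formula --- equivalently, by the Murnaghan--Nakayama rule applied to an $r$-cycle --- one has $c_\mu=(-1)^{a-1}$ when $\mu$ is the hook $(a,1^{r-a})$ and $c_\mu=0$ otherwise. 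The conjugate of $(a,1^{r-a})$ is the hook $(r-a+1,1^{a-1})$, so only the $r$ hook partitions contribute, the term indexed by $(a,1^{r-a})$ being $(-1)^{a-1}(-1)^{r-a}\lambda^r([\PP])\smallsmile\lambda^r([\QQ])=(-1)^{r-1}\lambda^r([\PP])\smallsmile\lambda^r([\QQ])$. Summing over $a=1,\dots,r$ gives $(-1)^{r-1}\,r\,\lambda^r([\PP])\smallsmile\lambda^r([\QQ])$, which is the desired formula.

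The step I expect to be the main obstacle is the second one: one must verify carefully that the natural Cauchy filtration of the bifunctor $(M,N)\mapsto\Lambda^a(M\otimes N)$ assembles, compatibly with all differentials of $\PP\boxtimes\QQ$, into a filtration of the binary multicomplex underlying $\lambda^r(\PP\boxtimes\QQ)$ with the stated associated graded, and that the resulting subquotient classes are indeed $s_\mu([\PP])\smallsmile s_{\mu'}([\QQ])$ --- in particular that the chain-level Schur functors arising here are the derived ones, whose classes in $K_*(X)$ are the $\lambda$-ring operations $s_\mu$ and not the naive termwise Schur powers. The remaining ingredients are formal: the biadditive reduction uses only the vanishing of internal products in positive degrees, and the final collapse is elementary symmetric-function combinatorics.
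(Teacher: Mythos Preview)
Your proposal is correct and follows essentially the same route as the paper: reduce to generators, apply the Cauchy decomposition of \cite{ABW}, and collapse using the vanishing of internal (simplicial) products in positive degree. Two small points: the ABW subquotients are $L_\mu(M)\otimes K_\mu(N)$ with $K_\mu$ the Weyl (coSchur) functor, not $L_{\mu'}(N)$ --- these are not isomorphic in positive characteristic, though their $K$-theory classes agree, which is all you need; and the step you flagged is exactly where the paper does the real work, carrying the Cauchy filtration through the Dold--Kan correspondence in two stages (first with one factor a constant object $P$, obtaining a filtration of $\Lambda^r_n(P\otimes\QQ)$ with quotients $L_\mu(P)\otimes K_{\mu,n}(\QQ)$, then repeating in the $\PP$-directions), and then invoking the universal Pieri formula of \cite{AB85} to identify $[L_{\mu,m}(\PP)]$ and $[K_{\mu,n}(\QQ)]$ with $s_\mu$ and $s_{\tilde\mu}$ evaluated at the $[\Lambda^i]$ under the simplicial product. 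The paper's final collapse uses $\sum_\mu s_\mu s_{\tilde\mu}=P_r$ together with $P_r(0,\ldots,0,X_r,0,\ldots,0,Y_r)=(-1)^{r-1}rX_rY_r$ via Newton polynomials, which is equivalent to your direct hook count via Murnaghan--Nakayama.
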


\begin{proof}
We may assume that $x$ and $y$ are the classes of $m$- and $n$-dimensional acyclic binary multi-complexes $\PP$ and $\QQ$, respectively. According to {\em loc.\ cit.},  $\lambda^r(y)$ is defined as the class of  $\Lambda^r_n(\QQ):= N^n \Lambda^r \Gamma^n(\QQ) \in \cB^n(X)$, where we write $\Gamma^n$ and $N^n$ for the $n$-dimensional versions of the Dold--Kan correspondence functors $\Gamma$ and $N$ ({\em cf.}\ Remark~3.6 in {\em loc.\ cit.}). Hence, for every object~$P$ in $\mc{P}(X)$, we have
\[\Lambda^r_n(P \otimes \QQ) = N^n \Lambda^r \Gamma^n(P \otimes \QQ) = N^n\Lambda^r(P \otimes \Gamma^n(\QQ)).\]
By \cite[Theorem~III.2.4]{ABW}, the `$n$-simplicial binary object' $\Lambda^r (P \otimes \Gamma^n(\QQ))$ has a natural filtration with the following property: when $\mu$ runs through the set of partitions of weight~$r$, the tensor product $L_\mu(P) \otimes K_\mu(\Gamma^n(\QQ))$ runs through the sequence of successive quotients of this filtration; here, $L_\mu$ and $K_\mu$ denote the Schur and coSchur functors associated with the partition~$\mu$ \cite[pp.~219--220]{ABW}. As $L_\mu$ and $K_\mu$ are endo-functors of $\cP(X)$ \cite[Theorem~II.2.16]{ABW},  all objects in the filtration steps are in $\cP(X)$ as well. Since $N^n$ is exact and commutes with $L_\mu(P) \otimes -$, the object $\Lambda^r_n(P \otimes \QQ)$ therefore has a natural filtration by subobjects belonging to $\cB^n(X)$ whose successive quotients are $L_\mu(P) \otimes K_{\mu,n}(\QQ)$, where $K_{\mu, n}$ denotes the endo-functor $N^n K_\mu \Gamma^n$ of $\cB^n(X)$, see also~\cite[Section 4]{HKT17}. Repeating the above argument, with the roles of the first and second factors interchanged and with $\Lambda^r$ replaced with $\Lambda^r_n$, we conclude that the object
\[\Lambda_{m+n}^r(\PP \boxtimes \QQ) = N^m \Lambda_n^r\Gamma^m(\PP \boxtimes \QQ) = N^m \Lambda^r_n (\Gamma^m(\PP) \boxtimes \QQ)\]
has a filtration by subobjects belonging to $\cB^{m+n}(X)$ whose successive quotients are $L_{\mu,m}(\PP) \boxtimes K_{\mu,n}(\QQ)$. Using the universal form of the Pieri formula \cite[Section~3, Thoerem~(3)(a)]{AB85}, one shows as in \cite[Proposition~2.1]{Classgroups} that
\[[L_{\mu,m}(\PP)] = s_\mu\left([\Lambda^1_m(\PP)], \ldots, [\Lambda_m^r(\PP)]\right) \quad \textrm{ in } \quad K_0(\cB^m(X))\]
and
\[[K_{\mu,n}(\QQ)] = s_{\tilde{\mu}}\left([\Lambda^1_n(\QQ)], \ldots, [\Lambda^r_n(\QQ)]\right) \quad \textrm{ in } \quad K_0(\cB^n(X));\]
here, $s_\mu$ and $s_{\tilde{\mu}}$ denote the Schur polynomials associated with ${\mu}$ and with the transposed partition $\tilde{\mu}$, respectively, see \cite[I Section~3]{McD79}; note that the multiplication on the right-hand sides of these formulae is given by the simplicial tensor products
\[ \otimes_{\Delta,m}: \cB^m(X) \times \cB^m(X) \rightarrow \cB^m(X) \;\; \textrm{and} \;\; \otimes_{\Delta,n}: \cB^n(X) \times \cB^n(X) \rightarrow \cB^n(X)\]
constructed in \cite[Section~5]{HKT17}. By \cite[p.~35, formula (4.3')]{McD79} we furthermore have
\[\sum_{|\mu|=r} s_\mu(X_1, \ldots, X_r) s_{\tilde{\mu}}(Y_1, \ldots, Y_r) = P_r (X_1, \ldots, X_r, Y_1, \ldots, Y_r)\]
in $\mathbb{Z}[X_1, \ldots, X_r, Y_1, \ldots, Y_r]$, where $P_r (X_1, \ldots, X_r, Y_1, \ldots, Y_r)$ denotes the integral polynomial used in the formulation of the $\lambda$-ring axiom for products, see \cite[Equation~I(1.3)]{FultonLang}. We therefore obtain
\[[\Lambda^r_{m+n}(\PP \boxtimes \QQ)] = P_r\left([\Lambda_m^1(\PP)], \ldots, [\Lambda_m^r(\PP)], [\Lambda_n^1(\QQ)], \ldots [\Lambda^r_n(\QQ)]\right)\]
in $K_0\left(\cB^{m+n}(X) \right)$; note that the polynomial on the right-hand side is evaluated using the simplicial tensor products $\otimes_{\Delta,m}$, $\otimes_{\Delta,n}$ and the external tensor product $\boxtimes$ appropriately. Passing to $K_{m+n}(X)$ we finally obtain
\[[\Lambda^r_{m+n}(\PP \boxtimes \QQ)] = P_r\left(0, \ldots, 0, [\Lambda_m^r(\PP)], 0, \ldots, 0, [\Lambda_n^r(\QQ)]\right)\]
in $K_{m+n}(X)$, as all simplicial tensor products vanish by \cite[Proposition~5.11]{HKT17} and as the polynomial $P_r$ is homogeneous of weighted degree $r$ in both $X_1, \ldots, X_r$ and $Y_1, \ldots, Y_r$. Now Theorem~\ref{thm: ExternalProducts} follows from the formula
\[P_r(0, \ldots, 0, X_r, 0, \ldots, 0, Y_r) = (-1)^{r-1} \, r \, X_r Y_r  \textrm{ in } \mathbb{Z}[X_1, \ldots, X_r, Y_1, \ldots, Y_r],\]
which for instance is implied by the well-known formulae
\[N_r(0, \ldots, 0, X_r) = (-1)^{r-1} \, r \, X_r\]
and
\begin{align*}
N_r(X_1, \ldots, X_r) \cdot & N_r(Y_1, \ldots, Y_r)\\&
= N_r (P_1(X_1, Y_1), \ldots, P_r(X_1, \ldots, X_r, Y_1, \ldots, Y_r))
\end{align*}
for the $r^\textrm{th}$ Newton polynomial $N_r$.
\end{proof}

\begin{cor}\label{cor: AdamsOperations}
For $x,y$ as in Theorem~\ref{thm: ExternalProducts} and for the $r^\textrm{th}$ Adams operation~$\psi^r$ we have
\[\psi^r(x \smallsmile y) = \psi^r(x) \smallsmile \psi^r(y) \quad \textrm{ in } \quad K_{m+n}(X).\]
\end{cor}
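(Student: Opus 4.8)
The plan is to reduce both sides of the claimed identity to a common multiple of $\lambda^r(x) \smallsmile \lambda^r(y)$ and then to quote \cref{thm: ExternalProducts}. First I would recall that in the (special) $\lambda$-ring $K_*(X)$ constructed in \cite{HKT17} the $r^{\mathrm{th}}$ Adams operation is by definition $\psi^r = N_r(\lambda^1, \ldots, \lambda^r)$, where $N_r \in \ZZ[X_1, \ldots, X_r]$ is the $r^{\mathrm{th}}$ Newton polynomial; assigning $X_i$ the weight $i$ makes $N_r$ weighted-homogeneous of degree $r$, and Newton's identity shows that every monomial of $N_r$ other than its linear term $(-1)^{r-1}\, r\, X_r$ is divisible by a product $X_i X_j$ of two (not necessarily distinct) variables.

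Next I would establish the auxiliary fact that for any $z$ lying in a single group $K_\ell(X)$ with $\ell \ge 1$ one has $\psi^r(z) = (-1)^{r-1}\, r\, \lambda^r(z)$: each $\lambda^i(z)$ again lies in $K_\ell(X)$, and since the multiplication on $\bigoplus_{n \ge 1} K_n(X)$ in $K_*(X)$ is trivial (as recalled above), every monomial of $N_r(\lambda^1(z), \ldots, \lambda^r(z))$ that is divisible by a product of two variables vanishes, leaving only the contribution of the linear term.

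Finally I would apply this to $z = x$, to $z = y$ and to $z = x \smallsmile y$ --- all legitimate since $m$, $n$ and $m+n$ are positive --- and combine with \cref{thm: ExternalProducts}. This gives, in $K_{m+n}(X)$,
\[
\psi^r(x \smallsmile y) = (-1)^{r-1}\, r\, \lambda^r(x \smallsmile y) = \bigl((-1)^{r-1}\, r\bigr)^2\, \lambda^r(x) \smallsmile \lambda^r(y) = r^2\, \lambda^r(x) \smallsmile \lambda^r(y),
\]
while
\[
\psi^r(x) \smallsmile \psi^r(y) = \bigl((-1)^{r-1}\, r\, \lambda^r(x)\bigr) \smallsmile \bigl((-1)^{r-1}\, r\, \lambda^r(y)\bigr) = r^2\, \lambda^r(x) \smallsmile \lambda^r(y),
\]
and comparing the two finishes the argument.

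I do not anticipate a genuine obstacle here: the only point demanding care is the vanishing of the internal products arising when $N_r(\lambda^1(z), \ldots, \lambda^r(z))$ is expanded for $z$ in positive degree, and this is exactly the (already recorded) triviality of the product on $\bigoplus_{n \ge 1} K_n(X)$, so the whole proof is purely formal once \cref{thm: ExternalProducts} is available.
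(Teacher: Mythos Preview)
Your proposal is correct and follows essentially the same approach as the paper: both proofs combine \cref{thm: ExternalProducts} with the identity $\psi^r(z) = (-1)^{r-1}\, r\, \lambda^r(z)$ for $z$ in positive degree, which the paper simply quotes (pointing to the Newton-polynomial computation at the end of the proof of \cref{thm: ExternalProducts}) and which you rederive in slightly more detail from the vanishing of the internal product on $\bigoplus_{n \ge 1} K_n(X)$.
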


\begin{proof}
This immediately follows from the previous theorem and the formula
\[\psi^r(x) = (-1)^{r-1}\, r \, \lambda^r(x)\]
for all $x \in K_n(X)$ and all $n > 0$ (see also end of proof of previous theorem).
\end{proof}

\section{Exterior Powers of $n$-Cubes}

The main object of this section is to give a formula (see \cref{thm: Exterior Powres of Cubes} and \cref{rem: ArbitraryCubes}) for the $r^\mathrm{th}$ exterior power operation applied to an $n$-cube, i.e., to a multi-complex in $\cB^n(X)$ that is supported only on $[0,1]^n$. In \cref{cor: Hiller}, we deduce that the exterior power operations defined in \cite{HKT17} agree with those defined by Hiller \cite{Hiller} on the $K_1$-group of a commutative ring $R$. In \cref{cor: MilnorInvariant}, we moreover derive that the image of the newly defined modified Milnor $K$-group $\widetilde{K}^\mathrm{M}_n(R)$ in $K_n(R)$ is invariant under exterior power operations. We begin with some notation.

\medskip

Let $F: \cP \ra \cM$ be a functor from an additive category $\cP$ to an idempotent-complete additive category $\cM$ such that $F(0) =0$. The {\em $i^\textrm{th}$ cross-effect of~$F$} is  the functor $\cross_i F: \cP^i \ra \cM$ defined inductively as follows: $\cross_1 F := F$, and, for objects $P_1, \ldots, P_i \in \cP$, the object $\cross_i F(P_1, \ldots, P_i) \in \cM$ is the canonical direct-sum complement of
\[\cross_{n-1}F(P_1, P_3, \ldots, P_i) \oplus \cross_{n-1}F(P_2, P_3, \ldots, P_i)\]
in $\cross_{n-1}F(P_1 \oplus P_2, P_3, \ldots, P_i)$, see for instance \cite[Section~9]{EM} or \cite[Section~1]{Koszul} for more details. For example, we have
\[\cross_2\Lambda^3(P_1, P_2) = \Lambda^2(P_1) \otimes P_2 \oplus P_1 \otimes \Lambda^2(P_2)\]
for the third exterior power functor $\Lambda^3$ and we have
\[\cross_r\Lambda^r(P_1, \ldots, P_r) = P_1 \otimes\cdots \otimes P_r\]
for all $r \ge 1$.

Let $\Aut(\cP)$ denote the category of automorphisms of $\cP$: its objects are pairs~$(P;A)$ consisting of an object $P \in \cP$ and an automorphism~$A$ of~$P$, and the morphisms from $(P;A)$ to $(P';A')$ are those morphisms $B$ from~$P$ to $P'$ that satisfy $BA=A'B$. By iterating this definition we obtain the categories $\Aut^n(\cP)$, $n \ge 1$. Thus, an object of $\Aut^n(\cP)$ is a tuple $(P; A_1, \ldots, A_n)$ consisting of an object $P$ of $\cP$ and of $n$ pairwise commuting automorphisms $A_1, \ldots, A_n$ of $P$. If $\cP$ is an exact category, then sending $(P,A)$ to the (bounded acyclic) binary complex
\[\xymatrix{
\cdots \binr{0}{0} & P \binr{A}{1} & P
}\]
defines a functor $\Aut(\cP) \ra B^\mathrm{q}_\mathrm{b}(\cP)$. By iterating we obtain functors
\[\Aut^n(\cP) \ra (B^\mathrm{q}_\mathrm{b})^n(\cP), \qquad n \ge 1.\]
We denote the image of $(P;A_1, \ldots, A_n)$ in $(B^\mathrm{q}_\mathrm{b})^n(\cP)$ by $[P;A_1, \ldots, A_n]$; i.e., $[P, A_1, \ldots, A_n]$ is the $n$-cube with every vertex equal to $P$ and and with every binary morphism in direction $i \in \{1, \ldots, n\}$ equal to
\[\xymatrix{P \binr{A_i}{1}& P}.\]

\begin{thm}\label{thm: Exterior Powres of Cubes}
Let $X$ be a quasi-compact scheme, let $r, n \ge 1$ and let $(P; A_1, \ldots, A_n) \in \Aut^n(\cP(X))$. Then, in $K_n(X)$, we have:
\[
\lambda^r[P;A_1, \ldots, A_n]
= \sum_{i=1}^r (-1)^{i-1}[\cross_i\Lambda^r(P, \ldots, P); B_1, \ldots, B_n],
\]
where $B_1 = \cross_i\Lambda^r(A_1, 1, \ldots, 1)$ and $B_j = \cross_i\Lambda^r(A_j, \ldots, A_j)$ for $j=2, \ldots, n$.
\end{thm}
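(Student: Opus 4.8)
The plan is to proceed by induction on $n$, using the explicit description of $\Lambda^r$ of a one-dimensional binary complex. For the base case $n=1$, I would invoke Lemma~2.2 of \cite{Koszul}, which describes $\Lambda^r$ of an ordinary acyclic chain complex supported on $[0,1]$; applied to $[P;A_1]$, whose underlying graded object is $P$ in degrees $0$ and $1$ with differential $A_1$ (resp.\ $1$), one obtains that $\Lambda^r_1[P;A_1]$ is (up to the usual filtration/$K_0$-identities) the alternating sum over $i$ of the binary complexes with entries $\cross_i\Lambda^r(P,\ldots,P)$ in various degrees and differentials given by the appropriate cross-effects of $A_1$ and $1$. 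The key combinatorial input is that $\Lambda^r(P \oplus P[1])$ decomposes by the exponential/Cauchy-type formula as $\bigoplus_{i} \Lambda^{r-i}(P) \otimes \Lambda^i(P)[i]$, and, after applying the Dold--Kan functors $N^1\Gamma^1$ and the quotient to $K_1$, the term with $\Lambda^{r-i}(P)\otimes\Lambda^i(P)$ contributes exactly $\cross_i\Lambda^r(P,\ldots,P)$ with differential $\cross_i\Lambda^r(A_1,1,\ldots,1)$; here I would use that $\cross_i\Lambda^r(A,1,\ldots,1)$ is precisely the automorphism induced on the cross-effect by $A$ in the first slot and the identity elsewhere, matching the multiplicativity pattern $\Lambda^{r-i}(A)\otimes\Lambda^i(1)$.

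For the inductive step, write $[P;A_1,\ldots,A_n] = [P;A_1,\ldots,A_{n-1}] \boxtimes_{?} [P;A_n]$ --- more precisely, regard the $n$-cube as the one-dimensional binary complex, in direction $n$, whose two vertices are both the $(n-1)$-cube $[P;A_1,\ldots,A_{n-1}]$ and whose edge in direction $n$ is multiplication by $A_n$ (resp.\ $1$) acting diagonally. Since $\lambda^r$ is computed by $N^n\Lambda^r\Gamma^n$ and the Dold--Kan functors in the last direction commute with those in the first $n-1$ directions, I can run the one-dimensional computation in direction $n$ with "coefficients" in $\cB^{n-1}(X)$: applying the base-case analysis yields $\lambda^r[P;A_1,\ldots,A_n] = \sum_i (-1)^{i-1}[\cross_i\Lambda^r(\cQ,\ldots,\cQ); \cross_i\Lambda^r(A_n,\ldots,A_n)]$ where $\cQ = [P;A_1,\ldots,A_{n-1}]$ viewed in $\cB^{n-1}(X)$ and the displayed differential is in direction $n$. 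Now $\cross_i\Lambda^r$ applied to $\cQ$ in $\cB^{n-1}(X)$ is, objectwise and differential-wise, the $(n-1)$-cube $[\cross_i\Lambda^r(P,\ldots,P); \cross_i\Lambda^r(A_1,1,\ldots,1), \cross_i\Lambda^r(A_2,\ldots,A_2),\ldots,\cross_i\Lambda^r(A_{n-1},\ldots,A_{n-1})]$ --- this is a functoriality statement: applying a polynomial functor of $\cP(X)$ to a cube $[P;C_1,\ldots,C_{n-1}]$ gives the cube $[F(P);F(C_1),\ldots]$ when $F$ is multiplicative, and the cross-effect here behaves correctly because in direction $j\le n-1$ the automorphism acts diagonally, i.e.\ as $C_j = A_j$ in all $r$ slots. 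Combining, the inner differential in direction $n$ is $\cross_i\Lambda^r(A_n,\ldots,A_n)$ and the differentials in directions $1,\ldots,n-1$ are exactly the $B_j$ of the statement, giving the claimed formula.

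There is one genuine subtlety I expect to be the main obstacle: in the one-dimensional computation the class of $\Lambda^r_1[P;A_1]$ is only obtained after collapsing a filtration and using relations in $K_1$, and in particular it involves the simplicial tensor products $\otimes_{\Delta}$ among the intermediate pieces, together with the fact that internal products vanish in $K_n$ for $n\ge 1$ --- this is why the cross-terms $\Lambda^{r-i}(P)\otimes\Lambda^i(P)$ collapse to the direct-summand $\cross_i\Lambda^r(P,\ldots,P)$ rather than surviving as a nontrivial product. Carrying this through the induction requires a relative version of the vanishing of internal products --- vanishing of products of a $K_n$-class with a $K_m$-class internally, or more precisely the statement that in $\cB^{n}(X)$ the relevant simplicial-tensor cross terms become zero after passing to $K_n(X)$ --- which is the content of \cref{lem: tensorproducts} cited in the introduction. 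The remaining work is bookkeeping: checking that the signs $(-1)^{i-1}$ propagate correctly (they come from the degree shifts $[i]$ in the Koszul-type decomposition), and that $\cross_i\Lambda^r(A_1,1,\ldots,1)$ really is the automorphism appearing in the first slot while the diagonal $\cross_i\Lambda^r(A_j,\ldots,A_j)$ appears in slots $j\ge 2$ --- both of which follow from the explicit inductive definition of the cross-effect and the naturality of the Cauchy filtration.
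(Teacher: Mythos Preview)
Your overall ingredients are the right ones --- the Koszul-type description of $NF\Gamma$ on a length-one complex, processing one direction at a time, and the generalised vanishing of simplicial tensor products (\cref{lem: tensorproducts}). However, the inductive step as you describe it does not go through, for two related reasons.

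First, when you apply the Koszul lemma in direction~$n$ with $F = \Lambda^r_{n-1}$, the differential you obtain in direction~$n$ is $\cross_i\Lambda^r_{n-1}(A_n,1,\ldots,1)$, \emph{not} the diagonal $\cross_i\Lambda^r(A_n,\ldots,A_n)$. The diagonal automorphisms $\cross_iF(A_j,\ldots,A_j)$ arise on the \emph{outer} directions (those not being processed), never on the direction you have just unfolded. So the direction that ends up ``special'' (with the $(A,1,\ldots,1)$-pattern) is the \emph{last} one processed, which in the paper's ordering is direction~$1$; your write-up has this backwards.

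Second, and more seriously, the object you obtain in the first $n-1$ directions is $\cross_i\Lambda^r_{n-1}(\cQ,\ldots,\cQ)$ for $\cQ = [P;A_1,\ldots,A_{n-1}]$, and for $i\ge 2$ this is \emph{not} an $(n-1)$-cube; it is a genuine object of $\cB^{n-1}(X)$ built via the Dold--Kan functors and supported on all of $[0,\infty)^{n-1}$. Your claim that it equals, ``objectwise and differential-wise'', the cube $[\cross_i\Lambda^r(P,\ldots,P);\cross_i\Lambda^r(A_1,1,\ldots,1),\ldots]$ is false: you are conflating the derived cross-effect $\cross_i\Lambda^r_{n-1}$ with the objectwise cross-effect, and conflating the $K$-theory identity in the inductive hypothesis with an identity of objects in $\cB^{n-1}(X)$.

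The paper's proof resolves this by \emph{not} carrying the full sum through the descent. At each intermediate step $m\to m-1$ (for $m\ge 2$), one uses the decomposition
\[
\cross_i\Lambda^r_{m-1}(\PP_1,\ldots,\PP_i)\cong \bigoplus_{j_1+\cdots+j_i=r}\Lambda^{j_1}_{m-1}(\PP_1)\otimes_{\Delta,m-1}\cdots\otimes_{\Delta,m-1}\Lambda^{j_i}_{m-1}(\PP_i)
\]
together with \cref{lem: tensorproducts} to show that every summand with $i\ge 2$ vanishes in $K_n(X)$. Only the $i=1$ term $[\Lambda^r_{m-1}[P;A_1,\ldots,A_{m-1}];\Lambda^r_{m-1}(A_m),\ldots,\Lambda^r_{m-1}(A_n)]$ survives, and this is again of the right shape to iterate. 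It is only at the very last step ($m=1$, where $\Lambda^r_0=\Lambda^r$ on $\cP(X)$) that the full alternating sum over $i$ appears and gives the stated formula. So \cref{lem: tensorproducts} is not a bookkeeping device for ``collapsing cross terms to direct summands'' --- it is what kills the higher-$i$ terms outright at every step but the last, and without it the descent cannot be continued.
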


For example, if $r=2$ or $r=3$, the right-hand element specialises to the expressions given in the introduction. Another interesting example is when the $\cO_X$-module $P$ is invertible and hence $A_1, \ldots, A_n \in \cO_X(X)^\times$; then, using the multi-linearity relations \cite[Section~5.1]{Grech}, we obtain
\begin{align*}
\lambda^r[P; A_1, \ldots, A_n] &= (-1)^{r-1}[P^{\otimes r}; A_1, A_2^r, \ldots, A_n^r]\\
& = (-1)^{r-1} r^{n-1} [P^{\otimes r}; A_1, \ldots, A_n];
\end{align*}
for instance, if $P$ is free of rank~$1$, then $\lambda^r$ acts on $[P; A_1, \ldots, A_n]$ by multiplication by $(-1)^{r-1} r^{n-1}$; this in turn implies that the  $r^\mathrm{th}$ Adams operation $\psi^r= (-1)^{r-1} r \lambda^r$ acts on  $[P; A_1, \ldots, A_n]$ by multiplication by $r^n$ which is in agreement with the classical result \cite[Example~5.9.1]{K-book} for the case when $X=\mathrm{Spec}(k)$, $k$ a field.

\begin{rem}\label
The reader may wonder why the first direction plays a distinguished role in \cref{thm: Exterior Powres of Cubes} above. We will see in the proof below that this is a result of choosing a particular ordering of the directions. As explained in \cite[Remark~3.6]{HKT17}, the inductive definition of exterior power operations on $K_n(X)$ given in \cite[Definitions~3.3 and~4.4]{HKT17} does not depend on the chosen ordering of directions. The same is true for the right-hand side of the formula in \cref{thm: Exterior Powres of Cubes}. For instance when $n=2$, we can see this directly (i.e., without using \cref{thm: Exterior Powres of Cubes}) as follows, writing $F_i$ for $\cross_i \Lambda^r$ and using the multi-linearity relation \cite[Section~5.1]{Grech} and the intrinsic symmetry of cross effects \cite[Theorem~9.3]{EM}:
\begin{align*}
[F_i &(P, \ldots, P); F_i(A_1, 1, \ldots, 1), F_i(A_2, \ldots, A_2)]\\
&=  [F_i(P, \ldots, P);F_i(A_1, 1, \ldots, 1), F_i(A_2, 1, \ldots, 1)]\; +\\
&\hspace*{1.4em} [F_i(P, \ldots, P);F_i(A_1, 1, \ldots, 1), F_i(1, A_2, 1, \ldots, 1)]+ \cdots + \\
& \hspace*{1.4em}[F_i(P, \ldots, P);F_i(A_1, 1, \ldots, 1), F_i(1, \ldots, 1, A_2)]\\
&= [F_i(P, \ldots, P); F_i(A_1, 1, \ldots, 1), F_i(A_2, 1, \ldots, 1)]\; +\\
&\hspace*{1.4em} [F_i(P, \ldots, P);F_i(1, A_1, 1, \ldots, 1), F_i(A_2, 1, \ldots, 1)]+ \ldots +\\
&\hspace*{1.4em} [F_i(P, \ldots, P);F_i(1, \ldots, 1, A_1), F_i (A_2, 1, \ldots, 1)]\\
&= [F_i(P, \ldots, P);F_i(A_1, \ldots, A_1), F_i(A_2, 1, \ldots, 1)].
\end{align*}
\end{rem}

\begin{proof}[Proof of \cref{thm: Exterior Powres of Cubes}]
  It is more intuitive to explain the main argument in the general context of the functor $F:\cP \ra \cM$ introduced at the beginning of this section.

  Let $\Gamma$ and $N$ denote the functors of the Dold--Kan correspondence, so the composition $NF\Gamma$ takes a non-negatively supported chain complex in $\cP$ first to a simplicial object in $\cP$, then to a simplicial object in $\cM$ and finally to a non-negatively supported chain complex in $\cM$. Furthermore, let $A: P \ra Q$ be a morphism in $\cP$ considered as a chain complex supported on $[0,1]$. Then the complex $NF\Gamma(\xymatrix{P \ar[r]^{A}& Q})$ in $\cM$ is isomorphic to the total complex of the double complex
  \[\xymatrix{
  \cdots \ar[r]& \cross_3 F(P,P,P) \ar[r] \ar[d]^{\cross_3 F(A, 1, 1)} & \cross_2 F(P,P) \ar[r] \ar[d]^{\cross_2 F(A,1)} & F(P) \ar[d]^{F(A)}\\
  \cdots \ar[r]& \cross_3 F(Q,P,P) \ar[r] & \cross_2 F(Q,P) \ar[r] & F(Q)
  }\]
  whose horizontal differentials are explicitly described in \cite[Lemma~2.2]{Koszul}. In fact, this statement is just a reformulation of that lemma. Moreover, if $(A_P, A_Q)$ is a morphism from the complex $\xymatrix{P \ar[r]^A &Q}$ to another complex $\xymatrix{P' \ar[r]^{A'}& Q'}$, then the induced morphism from $NF\Gamma(\xymatrix{P \ar[r]^A &Q})$ to $NF\Gamma(\xymatrix{P' \ar[r]^{A'} & Q'})$ is given, via the above isomorphism, by the morphisms $\cross_i F(A_P, \ldots, A_P)$, $i \ge 1$, and $\cross_i F(A_Q, A_P, \ldots, A_P)$, $i \ge 1$.

  By \cite[Definitions~3.3 and~4.4]{HKT17}, the $r^\mathrm{th}$ exterior power functor
  \[\Lambda_m^r: \cB^m(X) \ra \cB^m(X)\]
  for $m\ge 1$ is inductively defined by $\Lambda^r_m := N \Lambda^r_{m-1} \Gamma$ where $\Gamma$ and $N$ are the appropriately defined Dold--Kan correspondence functors. Similarly to the notation $[P; A_1, \ldots, A_n]$, for $m=1, \ldots, n$, we define the object
  \begin{equation}\label{object}
  [\Lambda^r_m[P; A_1, \ldots, A_m]; \Lambda^r_m(A_{m+1}), \ldots, \Lambda^r_m(A_n)] \in \cB^n(X)
  \end{equation}
  supported on $[0, \infty)^m \times [0,1]^{n-m}$ by considering $A_{m+1}, \ldots, A_n$ as automorphisms of $(P;A_1, \ldots, A_m)$ and then turning the automorphisms $\Lambda_m^r(A_{m+1})$, $\ldots, $ $\Lambda_m^r(A_n)$ of $\Lambda_m^r[P;A_1, \ldots, A_n]$ into binary isomorphisms in directions $m+1, \ldots, n$, respectively. Now, after filtering the double complex above vertically, the previous paragraph implies that the class of the element~(\ref{object}) in $K_n(X)$ is equal to
    \begin{equation}\label{element}
    \sum_{i=1}^r (-1)^{i-1}[\cross_i \Lambda_{m-1}^r ([P; A_1, \ldots A_{m-1}]); C_{m}, \ldots, C_n]
    \end{equation}
  where $C_{m} = \cross_i \Lambda_{m-1}^r (A_m, 1, \ldots, 1)$ and $C_j = \cross_i \Lambda_{m-1}^r(A_{j}, \ldots, A_j)$ for $j \in \{m+1, \ldots, n\}$.
  We'll prove below that, if $m \ge 2$, every summand in (\ref{element}) vanishes except the one for $i=1$. Then we obtain
  \begin{align*}
  \lambda^r&[P; A_1, \ldots, A_n]
  = \Lambda^r_n[P; A_1, \ldots, A_n]\\
  & = [\Lambda_{n-1}^r[P;A_1, \ldots, A_n]; \Lambda_{n-1}^r(A_n)]\\
  & = \ldots \\
  &= [\Lambda_1^r[P;A_1]; \Lambda_1^r(A_2), \ldots, \Lambda_1^r(A_n)]\\
  & = \sum_{i=1}^r (-1)^{i-1}[\cross_i\Lambda^r(P); B_1, \ldots, B_n],
  \end{align*}
  as was to be shown.

  It remains to show the above statement about the sum~(\ref{element}). An easy induction on $m$ yields the isomorphism
  \[\Lambda_m^r(\PP \oplus \QQ) \cong \Lambda^r_m(\PP)\oplus \bigoplus_{j=1}^{r-1} \left(\Lambda_m^j(\PP) \otimes_{\Delta,m} \Lambda_m^{r-j}(\QQ) \right) \oplus \Lambda_m^r(\QQ).\]
  for any objects $\PP, \QQ \in \cB^n(X)$ and $m\ge 0$; here, $\otimes_{\Delta,m}$ denotes the simplicial tensor product introduced in \cite[Section~5]{HKT17}. From this one easily derives that
  \[\cross_i \Lambda^r_{m}(\PP_1, \ldots, \PP_i) \cong \bigoplus_{j_1, \ldots, j_i \ge 1: \, j_1 + \cdots +j_i =r} \Lambda_{m}^{j_1}(\PP_1) \otimes_{\Delta, m} \cdots \otimes_{\Delta,m} \Lambda_{m}^{j_i}(\PP_i)\]
  for any $\PP_1, \ldots, \PP_i \in \cB^m(X)$. By \cite[Proposition~5.11]{HKT17}, the class of this element in $K_m(X)$ vanishes if $m\ge 1$ and $i \ge 2$. However, in order to see the above statement about the sum~(\ref{element}), we need the following more general version of  that proposition.
\end{proof}

\begin{lem}\label{lem: tensorproducts}
Let $n \ge m \ge 1$. Let $\PP$ and $ \QQ$ be objects in $\cB^m(X)$, let $A_{m+1}, \ldots, A_n \in \Aut(\PP)$ and let $B_{m+1}, \ldots, B_n \in \Aut(\QQ)$. Then we have:
\[[\PP \otimes_{\Delta,m} \QQ; A_{m+1} \otimes_{\Delta,m} B_{m+1}, \ldots, A_n \otimes_{\Delta,m} B_n] =0 \quad \textrm{ in } \quad K_n(X).\]
\end{lem}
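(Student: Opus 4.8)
The plan is to bootstrap from \cite[Proposition~5.11]{HKT17} --- which is exactly the case $n=m$, since then there are no automorphisms --- to the general statement, by carrying the commuting automorphisms through that argument. The underlying observation is that the ``realisation'' functor
\[\Aut^{n-m}(\cB^m(X)) \longrightarrow \cB^n(X), \qquad (\mathbb{M}; C_{m+1}, \ldots, C_n)\longmapsto [\mathbb{M}; C_{m+1}, \ldots, C_n],\]
is additive and exact, and sends any multicomplex that is diagonal in one of its directions to one of the same kind, hence --- by the defining relation of $K_n(X)$ in \cref{def:GraysonKn} --- to zero in $K_n(X)$. So the content of the lemma is that, in $K_0\big(\Aut^{n-m}(\cB^m(X))\big)$, the class of $(\PP\otimes_{\Delta,m}\QQ;\, A_{m+1}\otimes_{\Delta,m}B_{m+1}, \ldots, A_n\otimes_{\Delta,m}B_n)$ is a $\ZZ$-linear combination of classes whose underlying $m$-multicomplex is diagonal.

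There are two natural ways to establish this. The first is to revisit the proof of \cite[Proposition~5.11]{HKT17} and extract from it a finite filtration of $\PP\otimes_{\Delta,m}\QQ$ by subobjects in $\cB^m(X)$ with diagonal successive quotients that is \emph{natural in $\PP$ and $\QQ$}; then each $A_i\otimes_{\Delta,m}B_i$ preserves the filtration and induces commuting automorphisms on each (diagonal) subquotient, and applying the exact realisation functor and passing to $K_n(X)$ finishes the proof. The second is an induction on $n-m\ge 0$: the base case $n=m$ is \cite[Proposition~5.11]{HKT17}; for the step, use the multi-linearity relations \cite[Section~5.1]{Grech} to write each $A_i\otimes_{\Delta,m}B_i = (A_i\otimes_{\Delta,m}1_\QQ)\circ(1_\PP\otimes_{\Delta,m}B_i)$ as a product of two commuting automorphisms, expand the class into a sum of terms in each of which every automorphism is of ``$\PP$-type'' $A_i\otimes_{\Delta,m}1_\QQ$ or of ``$\QQ$-type'' $1_\PP\otimes_{\Delta,m}B_i$, and then absorb one of the two-term binary complexes --- say the one in direction $n$, which (according to its type) is $\xymatrix{\cdot \binr{A_n}{1}& \cdot}$ tensored with $1_\QQ$, or $1_\PP$ tensored with $\xymatrix{\cdot \binr{B_n}{1}& \cdot}$ --- into a fresh direction, thereby replacing the pair $(\PP,\QQ)$ by $([\PP;A_n],[\QQ;1_\QQ])$ or by $([\PP;1_\PP],[\QQ;B_n])$ in $\cB^{m+1}(X)\times\cB^{m+1}(X)$, on which the remaining automorphisms still act; this reduces $n-m$ by one, and one concludes by the inductive hypothesis.

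The main obstacle lies in whichever mechanism one adopts. For the first route one must check that the argument of \emph{loc.\ cit.} is genuinely functorial in $\PP$ and $\QQ$: the bare equality $[\PP\otimes_{\Delta,m}\QQ]=0$ in $K_m(X)$ does not suffice on its own, as it need not respect automorphisms. For the second route the delicate point is the ``absorption'' step, because the simplicial tensor product in the newly created direction $m+1$ does not return a two-term complex --- the simplicial tensor of $\xymatrix{\cdot \binr{A_n}{1}& \cdot}$ with $\xymatrix{\cdot \binr{B_n}{1}& \cdot}$ is supported beyond $[0,1]$ in that direction --- so one has to compare the two via the Eilenberg--Zilber maps and verify that the comparison is an isomorphism modulo multicomplexes diagonal in a direction $\le m$ and is natural in the remaining automorphisms. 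Either way, the crux is to propagate the vanishing mechanism of \cite[Proposition~5.11]{HKT17} through the extra binary-complex directions compatibly with the attached automorphisms.
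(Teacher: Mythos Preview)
Your first route --- rerunning the proof of \cite[Proposition~5.11]{HKT17} with the automorphisms carried along --- is exactly what the paper does, and your diagnosis that the bare equality $[\PP\otimes_{\Delta,m}\QQ]=0$ in $K_m(X)$ is not enough while \emph{naturality} of the argument in $(\PP,\QQ)$ is the real point is spot on. One correction, however: that proof does \emph{not} furnish a filtration of $\PP\otimes_{\Delta,m}\QQ$ with diagonal successive quotients. The subquotients $\PP_j[j]\otimes_{\Delta,m}\QQ$ of the stupid filtration of~$\PP$ in direction~$m$ are not diagonal (since $\QQ$ need not be), and the argument does not stop there. Instead the paper proceeds in three steps, each manifestly natural in $(\PP,\QQ)$: first filter $\PP$ in direction~$m$ to write the class as $\sum_j [\PP_j[j]\otimes_{\Delta,m}\QQ;\,A_{\bullet,j}[j]\otimes B_\bullet]$ (the acyclicity of~$\QQ$ keeps each summand in $\cB^n(X)$); then apply the shifting rule \cite[Lemma~1.6]{HKT17} to pass to $\sum_j (-1)^j[\PP_j[0]\otimes_{\Delta,m}\QQ;\ldots] = \sum_j(-1)^j[\PP_j\otimes_{\Delta,m-1}\QQ;\ldots]$; and finally use the acyclicity of~$\PP$ in direction~$m$ to conclude that this alternating sum vanishes. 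So the vanishing mechanism is filtration~+~shifting~+~acyclicity rather than a single filtration with diagonal graded pieces; once you make that adjustment, your first approach \emph{is} the paper's proof. Your second route is consequently unnecessary, and its acknowledged ``absorption'' difficulty never arises.
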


\begin{proof} This follows by adapting the proof of \cite[Proposition~5.11]{HKT17} to the more general situation. The following lines explain the main steps.

We write $\PP$ as a binary complex of objects $\PP_j \in \cB^{m-1}(X)$, $j \ge 0$, and denote by $\PP_j[j]$ the binary complex that has the object~$\PP_j$ in degree $j$ and that is $0$ everywhere else. The binary complex $\PP_j[0]$ is similarly defined. By filtering~$\PP$ and using the shifting rule \cite[Lemma~1.6]{HKT17}, we obtain
\begin{align*}
[\PP &\otimes_{\Delta,m} \QQ; A_{m+1} \otimes_{\Delta,m} B_{m+1}, \ldots, A_n \otimes_{\Delta,m} B_n]\\
& = \sum_{j \ge 0} [\PP_j[j] \otimes_{\Delta,m}\QQ; A_{m+1,j}[j] \otimes_{\Delta,m} B_{m+1}, \ldots, A_{n,j}[j] \otimes_{\Delta,m} B_n]\\
& = \sum_{j \ge 0} (-1)^j[\PP_j[0] \otimes_{\Delta,m}\QQ; A_{m+1,j}[0] \otimes_{\Delta,m} B_{m+1}, \ldots, A_{n,j}[0] \otimes_{\Delta,m} B_n]\\
& = \sum_{j \ge 0} (-1)^j[\PP_j \otimes_{\Delta,m-1} \QQ; A_{m+1,j} \otimes_{\Delta,m-1} B_{m+1}, \ldots, A_{n,j} \otimes_{\Delta,m-1}B_n],
\end{align*}
where $\PP_j \otimes_{\Delta, m-1}\QQ $ means tensoring every object in $\QQ$ with $\PP_j$ and every differential in $\QQ$ with $1_{\PP_j}$. Note that this argument indeed applies to both the object $\PP \otimes_{\Delta,m}\QQ$ and to the automorphisms $A_{m+1} \otimes_{\Delta,m} B_{m+1}, \ldots, A_n \otimes_{\Delta,m} B_n$ as indicated in the chain of equalities above. The assumption that $\QQ$ is acyclic implies that $\PP_j[j] \otimes_{\Delta,m}\QQ$, $\PP_j[0] \otimes_{\Delta,m}\QQ$ and $\PP_j \otimes_{\Delta, m-1} \QQ$ are acyclic and hence all summands above are well-defined elements in $K_n(X)$. The assumption that $\PP$ is acyclic implies that the final alternating sum above vanishes, as was to be shown.
\end{proof}

\begin{rem}\label{rem: ArbitraryCubes}
If a multi-complex in $\cB^n(X)$ is supported on $[0,1]^n$, we call it an {\em $n$-cube}. Examples of $n$-cubes are the cubes of the form $[P;A_1, \ldots, A_n]$ considered above. In arbitrary $n$-cubes, none of the two isomorphisms in any binary isomorphism may be the identity, the binary isomorphisms in any one direction may not be equal to each other and the vertices may actually be different objects. Nonetheless, every $n$-cube $\PP$ is isomorphic to a cube of the form $[P; A_1, \ldots, A_n]$ as the following paragraph shows.

We write $\PP$ as a binary isomorphism
$\xymatrix{\PP_1 \binr{\sss \alpha}{\sss \beta} & \PP_0}$
between ($n-1$)-cubes $\PP_1, \PP_0 \in \cB^{n-1}(X)$. Then, the isomorphisms $\beta$ and $1$ provide an isomorphism between the $n$-cubes $\PP$ and $\xymatrix{\PP_0 \binr{\sss \alpha \beta^{-1}}{\sss 1} & \PP_0}$. By induction, we may assume that $\PP_0$ is of the form $[P, A_1, \ldots, A_{n-1}]$. The fact that the components of $\alpha \beta^{-1}$ commute with the identities in $\PP_0$ implies that all these components are equal to each other, say, to $A_n$. Hence, $\PP$ is isomorphic to $[P; A_1, \ldots, A_n]$, as claimed.

The previous paragraph implies that \cref{thm: Exterior Powres of Cubes} actually gives a formula for arbitrary $n$-cubes in $\cB^n(X)$. For notational reasons we have refrained from explaining the proof of \cref{thm: Exterior Powres of Cubes} in that generality. For future reference, we now at least describe the resulting formula.

Let $\PP$ be an cube in $\cB^n(X)$ with vertices $\PP_{(j_1, \ldots, j_n)}$, $(j_1, \ldots, j_n) \in \{0,1\}^n$. For each $i \in \{1, \ldots, r\}$, we define the $n$-cube $\mathrm{c}_i (\PP) \in \cB^n(X)$ as follows. Its vertex at $(j_1, \ldots, j_n) \in \{0,1\}^n$ is equal to $\cross_i \Lambda^r\left(P_{(j_1, \ldots, j_n)}, \ldots, P_{(j_1, \ldots, j_n)}\right)$ if $j_1 =1$ and equal to $\cross_i \Lambda^r\left(P_{(j_1, , \ldots, j_n)}, P_{(1, j_2, \ldots, j_n)}, \ldots, P_{(1, j_2, \ldots, j_n)}\right)$ if  $j_1 =0$.  For instance, if $n=3$, the $n$-cube $\mathrm{c}_i(\PP)$ looks as follows (writing $F_i$ for $\cross_i\Lambda^r$):
\begin{center} \tiny
\begin{tikzcd}[column sep={12em,between origins},row sep=3em,column sep=1em]
&
F_i(P_{110}, \ldots P_{110})
  \arrow[rr]
  \arrow[dd]
&&
F_i(P_{010}, P_{110}, \ldots, P_{110})
  \arrow[dd]
\\
F_i(P_{111}, \ldots, P_{111})
  \arrow[rr,crossing over]
  \arrow[ur]
  \arrow[dd]
&&
F_i(P_{011}, P_{111}, \ldots, P_{111})
  \arrow[ur]
\\
&
F_i(P_{100}, \ldots, P_{100})
  \arrow[rr]
&&
F_i(P_{000}, P_{100}, \ldots, P_{100})
\\
F_i(P_{101}, \ldots, P_{101})
  \arrow[rr]
  \arrow[ur]
&&
F_i(P_{001}, P_{101}, \ldots, P_{101})
  \arrow[from=uu,crossing over]
  \arrow[ur]
\end{tikzcd}
\end{center}
The edges in $\mathrm{c}_i(\PP)$ are binary isomorphisms of the form $\cross_i\Lambda^r(B_1, B_2, \ldots, B_n)$ where $B_i$ is the (binary) identity wherever this makes sense and is otherwise equal to the relevant binary isomorphism in the given cube~$\PP$. Then we have:
\[\lambda^r(\PP) = \sum_{i=1}^r (-1)^{i-1} [\mathrm{c}_i(\PP)] \quad \textrm{ in } \quad K_n(X).\]
\end{rem}

In \cite{Hiller}, Hiller has constructed exterior power operations on Quillen's $n^\mathrm{th}$ $K$-group $K_n(R)$ for any commutative ring $R$.

\begin{cor}\label{cor: Hiller}
Via the canonical isomorphism from Grayson's to Quillen's description of $K_1(R)$, the exterior power operations on $K_1(R)$ introduced in \cite{HKT17} agree with Hiller's.
\end{cor}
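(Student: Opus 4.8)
The plan is to reduce, via the presentation of $K_1(R)$ together with the additivity of both operations, to a comparison on the standard generators of $K_1(R)$, and then to recognise Hiller's formula on such a generator as the case $n=1$ of \cref{thm: Exterior Powres of Cubes}. Concretely, I would first recall that $K_1(R)\cong\GL(R)^{\mathrm{ab}}$ classically, that Grayson's presentation is compatible with this identification, and that $K_1(R)$ is generated by the classes $[R^k;A]$ with $A\in\GL_k(R)$ (any $[P;A]$ equals $[R^k;A\oplus 1_Q]$ for a decomposition $R^k\cong P\oplus Q$, since $[Q;1_Q]=0$). I would then observe that \emph{both} exterior power structures are additive on $K_1(R)$: for the operations of \cite{HKT17} this is because the internal product on $K_{\ge1}(R)$ vanishes \cite[Proposition~5.11]{HKT17}, so that $\lambda^r(x+y)=\sum_{i+j=r}\lambda^i(x)\lambda^j(y)=\lambda^r(x)+\lambda^r(y)$ for $x,y\in K_1(R)$; for Hiller's operations additivity holds because his operations and product are compatible with the grading on $K_*(R)=\bigoplus_n K_n(R)$, so the cross-terms $\sum_{i=1}^{r-1}\lambda^i(x)\lambda^{r-i}(y)$ lie in $K_2(R)$ while also equalling $\lambda^r(x+y)-\lambda^r(x)-\lambda^r(y)\in K_1(R)$, and hence vanish. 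By additivity it therefore suffices to check that the two operations agree on each generator $[R^k;A]$.

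For the operations of \cite{HKT17}, the case $n=1$ of \cref{thm: Exterior Powres of Cubes} gives
\[\lambda^r[R^k;A]=\sum_{i=1}^r(-1)^{i-1}\bigl[\cross_i\Lambda^r(R^k,\ldots,R^k);\,\cross_i\Lambda^r(A,1,\ldots,1)\bigr]\quad\text{in }K_1(R).\]
If convenient, one expands $\cross_i\Lambda^r(R^k,\ldots,R^k)\cong\bigoplus_{j_1+\cdots+j_i=r,\ j_a\ge1}\Lambda^{j_1}R^k\otimes\cdots\otimes\Lambda^{j_i}R^k$, on which $\cross_i\Lambda^r(A,1,\ldots,1)$ acts as $\Lambda^{j_1}A\otimes1\otimes\cdots\otimes1$, and uses the $K_0(R)$-module structure of $K_1(R)$ to rewrite the right-hand side as an explicit integral combination of the classes $[\Lambda^jR^k;\Lambda^jA]$, $1\le j\le r$. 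As a consistency check, on a unit $u\in R^\times\subseteq K_1(R)$ only the $i=r$ summand survives and the formula collapses to $(-1)^{r-1}[R;u]$, in agreement with the specialisation recorded after the statement of \cref{thm: Exterior Powres of Cubes}.

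It then remains to unwind Hiller's construction \cite{Hiller} of $\lambda^r$ on $K_1(R)$ — which additivises the non-additive exterior power functor — and to verify that, applied to the class of $A\in\GL_k(R)$ and transported along the canonical isomorphism between Grayson's and Quillen's $K_1$, it produces exactly the expression above. I expect this to be the main obstacle: it requires translating Hiller's homotopy-theoretic definition into the explicit combinatorial shape coming from \cref{thm: Exterior Powres of Cubes} and carefully reconciling normalisations ($\lambda^0=1$, $\lambda^1=\id$, the sign conventions, and the identification of $[R^k;A]$ with the class of $A$ in $\GL(R)^{\mathrm{ab}}$). Should Hiller's formula instead be recorded in terms of the ordinary exterior-power classes $[\Lambda^jA]$ with coefficients that are binomial in the matrix size $k$, one further needs the elementary combinatorial identity rearranging these into the cross-effect form above — equivalently, the fact that both expressions are unchanged under the stabilisation $A\mapsto A\oplus1$, which pins the $k$-dependence down uniquely.
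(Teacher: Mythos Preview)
Your plan is in the right spirit and matches the paper up to the point where you apply \cref{thm: Exterior Powres of Cubes} with $n=1$ to compute the $\lambda^r$ of \cite{HKT17} on a generator. The genuine gap is exactly the step you flag as ``the main obstacle'': you never actually compute Hiller's $\lambda^r$ on a generator, and you suggest this would require ``translating Hiller's homotopy-theoretic definition'' into combinatorial form. That translation is the entire content of the corollary, and the paper carries it out by a route you do not anticipate.

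The key input you are missing is that Hiller's operations on $K_1(R)$ are, by construction, induced from the $\lambda$-ring structure on $K_0(\ZZ,R):=K_0(\Aut(\cP(R)))$ via the natural surjection $D\colon \widetilde{K}_0(\ZZ,R)\to K_1(R)$; hence $\lambda^r_{\mathrm{Hiller}}([P,A])=D\bigl(\lambda^r([P,A]-[P,1])\bigr)$. The quantity $\lambda^r([P,A]-[P,1])$ can then be expanded \emph{purely algebraically} using the explicit formula from \cite{GraysonExterior} for $\lambda^r$ of a difference in $K_0(\ZZ,R)$, namely an alternating sum over tuples $(a,b_1,\ldots,b_u)$ with $a\ge 0$, $b_i\ge 1$, $a+\sum b_i=r$, of classes $[\Lambda^a(P)\otimes\Lambda^{b_1}(P)\otimes\cdots\otimes\Lambda^{b_u}(P),\,\Lambda^a(A)\otimes 1\otimes\cdots\otimes 1]$. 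After applying $D$, the $a=0$ terms vanish in $K_1(R)$, and re-indexing yields precisely $\sum_{i=1}^r(-1)^{i-1}[\cross_i\Lambda^r(P,\ldots,P);\cross_i\Lambda^r(A,1,\ldots,1)]$, which is the $n=1$ case of \cref{thm: Exterior Powres of Cubes}. No homotopy theory needs to be unwound, and no stabilisation or binomial identity in the matrix size is required.

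Two minor remarks on the rest of your outline: the reduction to free modules $[R^k;A]$ is unnecessary, since the argument above works for arbitrary $(P,A)\in\Aut(\cP(R))$; and the additivity detour, while correct, is likewise not needed once you compare on all such classes directly.
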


Assuming moreover that the external products introduced in \cref{sec: ExternalProducts} agree with those defined in, say, \cite[Definition IV.6.6]{K-book}, then \cref{cor: AdamsOperations} and \cite[Proposition~8.2]{Hiller} show that the respective Adams operations on $K_n(R)$ agree on external products of elements in $K_1(R)$. We of course expect that the exterior power operations defined in \cite{HKT17} agree with Hiller's on all of $K_n(R)$.

\begin{proof}
Let $K_0(\ZZ, R)$ denote the Grothendieck group of the exact category $\Aut(\cP(R))$. We know that $K_0(\ZZ,R)$ is in fact a commutative ring and is equipped with exterior power operations. Let $\widetilde{K}_0(\ZZ,R)$ denote the corresponding reduced Grothendieck group, i.e., the kernel of the forgetful map from $K_0(\ZZ,R)$ to $K_0(R)$. Let $D$ denote the epimorphism from $\widetilde{K}_0(\ZZ,R)$ to $K_1(R)$ obtained by restricting the natural epimorphism $K_0(\ZZ,R) \rightarrow K_1(R)$ to $\widetilde{K}_0(\ZZ,R)$.  By construction (see also the proof of \cite[Theorem~3.3]{AdamsOperations}), Hiller's exterior power operations on $K_1(R)$ are compatible, via $D$, with the exterior power operations on the $\lambda$-ideal $\widetilde{K}_0(\ZZ,R)$ of $K_0(\ZZ,R)$. Hence, we obtain for Hiller's operation $\lambda^r$ applied to the class $[P,A] \in K_1(R)$ of $(P,A) \in \Aut(\cP(R))$:
\begin{equation}\label{equ: HillerLambda}
\lambda^r ([P,A]) = \lambda^r(D([P,A]-[P,1]))=D(\lambda^r([P,A]-[P,1])).
\end{equation}
By a formula on page~2 of \cite{GraysonExterior}, we have:
\begin{align*}
\lambda^r&([P,A]-[P,1]) =\\
&\sum_{u=1}^r (-1)^u \sum_{}[(\Lambda^a(P)\otimes \Lambda^{b_1}(P) \otimes \cdots \otimes \Lambda^{b_u}(P), \Lambda^a(A) \otimes 1 \otimes \cdots \otimes 1]
\end{align*}
where the second sum is taken over all tuples $(a,b_1, \ldots, b_u)$ such that $a\ge 0$, $b_1 \ge 1, \ldots, b_u \ge 1$ and $a+b_1+\cdots + b_u =r$. The summands for $a=0$  vanish in $K_1(R)$; so, after re-indexing, the right-hand side of \cref{equ: HillerLambda} becomes
\begin{align*}
 \sum_{i=1}^{r}& (-1)^{i-1} \sum_{b_1+ \cdots +b_i=r}[(\Lambda^{b_1}(P) \otimes \cdots \otimes \Lambda^{b_i}(P), \Lambda^{b_1}(A) \otimes 1 \otimes \cdots \otimes 1]\\
&=\sum_{i=1}^r (-1)^{i-1} [\mathrm{cr}_i\Lambda^r(P, \ldots, P), \mathrm{cr}_i\Lambda^r(A, 1, \ldots, 1)].
\end{align*}
The latter expression is equal to the right hand-side of the formula in \cref{thm: Exterior Powres of Cubes} for $n=1$. Hence, \cref{cor: Hiller} follows from \cref{thm: Exterior Powres of Cubes}.
\end{proof}

\medskip

In \cite{Grech}, Grech introduces and studies modified Milnor $K$-groups $\widetilde{K}_n^\mathrm{M}(R)$, $n \ge 1$, for any commutative ring $R$. He shows that they agree with ordinary Milnor $K$-groups for instance when $R$ is a field, and he proves various properties for them, such as the resolution theorem and the cofinality theorem.  The group $\widetilde{K}_n^\mathrm{M}(R)$ is defined as the group generated by the isomorphism classes in $\Aut^n(\cP(R))$ modulo short-exact-sequence relations, multilinearity relations and Steinberg relations. By \cite[Theorem~5.3.4]{Grech}, mapping the class of $(P; A_1, \ldots, A_n) \in \Aut^n(\cP(R))$ in $\widetilde{K}_n^\mathrm{M}(R)$ to its class in $K_n(R)$ defines a natural homomorphism
\[\phi_n: \widetilde{K}_n^\mathrm{M}(R) \rightarrow K_n(R).\]

\begin{cor}\label{cor: MilnorInvariant}
The image of $\phi_n$ is invariant under $\lambda^r$ for all $r\ge 1$ and $n \ge 1$.
\end{cor}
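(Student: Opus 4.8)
The plan is to deduce this directly from \cref{thm: Exterior Powres of Cubes}. Recall that $\im(\phi_n)$ is precisely the subgroup of $K_n(R)$ generated by the classes $[P; A_1, \ldots, A_n]$ as $(P; A_1, \ldots, A_n)$ ranges over $\Aut^n(\cP(R))$. The first point I would establish is that $\lambda^r$ \emph{restricts to a group endomorphism} of $K_n(R)$ for every $n \ge 1$: since $K_*(R)$ is a $\lambda$-ring whose multiplication is trivial on $\bigoplus_{k \ge 1} K_k(R)$ (by \cite[Proposition~5.11]{HKT17}), the $\lambda$-ring identity $\lambda^r(x+y) = \sum_{i=0}^r \lambda^i(x)\lambda^{r-i}(y)$ collapses, for $x, y \in K_n(R)$ with $n \ge 1$, to $\lambda^r(x+y) = \lambda^r(x) + \lambda^r(y)$, because every mixed term $\lambda^i(x)\lambda^{r-i}(y)$ with $1 \le i \le r-1$ is a product of two elements of $K_n(R)$ and hence vanishes. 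Consequently it suffices to check that $\lambda^r$ sends each generator $[P; A_1, \ldots, A_n]$ of $\im(\phi_n)$ back into $\im(\phi_n)$.

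For this I would invoke \cref{thm: Exterior Powres of Cubes}, which expresses $\lambda^r[P; A_1, \ldots, A_n]$ as the alternating sum $\sum_{i=1}^r (-1)^{i-1}[\cross_i\Lambda^r(P, \ldots, P); B_1, \ldots, B_n]$, with $B_1 = \cross_i\Lambda^r(A_1, 1, \ldots, 1)$ and $B_j = \cross_i\Lambda^r(A_j, \ldots, A_j)$ for $j \ge 2$. It then remains to recognise each summand as an element of $\im(\phi_n)$. The object $\cross_i\Lambda^r(P, \ldots, P)$ is a finite direct sum of tensor products of exterior powers of $P$ and therefore lies in $\cP(R)$; the maps $B_j$, being values of the multifunctor $\cross_i\Lambda^r$ on tuples of automorphisms of $P$, are automorphisms of this object; and they pairwise commute, since $B_jB_k = \cross_i\Lambda^r(\ldots, A_jA_k, \ldots) = \cross_i\Lambda^r(\ldots, A_kA_j, \ldots) = B_kB_j$ using $A_jA_k = A_kA_j$ (the argument also covering the asymmetric slot occupied by $B_1$). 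Hence $(\cross_i\Lambda^r(P, \ldots, P); B_1, \ldots, B_n) \in \Aut^n(\cP(R))$, so its class $[\cross_i\Lambda^r(P, \ldots, P); B_1, \ldots, B_n]$ in $K_n(R)$ is the image under $\phi_n$ of the corresponding class in $\widetilde{K}^\mathrm{M}_n(R)$. Summing over $i$ and using the first paragraph, $\lambda^r(\im \phi_n) \subseteq \im \phi_n$.

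The only step that is not purely formal is the additivity of $\lambda^r$ on $K_n(R)$: a priori $\lambda^r$ is far from additive, so without the vanishing of internal products one could not reduce the assertion to generators. Everything else — the closure of $\cP(R)$ under cross-effects of $\Lambda^r$ and the verification that the $B_j$ form a commuting family — is routine, and \cref{thm: Exterior Powres of Cubes} does the real work.
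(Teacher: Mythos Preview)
Your proof is correct and follows the same approach as the paper, which simply says the corollary ``immediately follows from \cref{thm: Exterior Powres of Cubes}.'' You have usefully made explicit the additivity of $\lambda^r$ on $K_n(R)$ for $n\ge 1$ (via vanishing of internal products) that is needed to reduce to generators, a point the paper leaves implicit.
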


\begin{proof}
 This immediately follows from Theorem~\ref{thm: Exterior Powres of Cubes}.
\end{proof}

\cref{cor: MilnorInvariant} raises the question whether there exist exterior power operations on $\widetilde{K}^\mathrm{M}_n(R)$ which are compatible, via $\phi_n$, with the exterior power operations on $K_n(R)$. More precisely, does the formula in \cref{thm: Exterior Powres of Cubes}, when considered as a formula in $\widetilde{K}^\mathrm{M}_n(R)$, define a well-defined operation on~$\widetilde{K}^\mathrm{M}_n(R)$? The  main difficulty here seems to be whether this formula factors modulo the Steinberg relations.

Operations on {\em classical} Milnor $K$-theory for fields (and certain smooth schemes) have been studied by Vial \cite{Vi}. He shows that all additive operations on $K_n^\mathrm{M}$ are in a sense trivial: more precisely, every endomorphism of the functor~$K^\mathrm{M}_n$, is multiplication by an integer. This is in agreement with the paragraph after \cref{thm: Exterior Powres of Cubes} above, which shows that exterior powers do indeed act by multiplication by an integer on the image of any classical Milnor $K$-theory element. We thank Oliver Br\"aunling for pointing us to \cite{Vi} and Charles Vial for elucidating emails.

\bibliographystyle{amsalpha}
\bibliography{computations}

\end{document}